\documentclass{amsart}
\usepackage{amssymb}
\usepackage{mathrsfs}

\begin{document}

\title{Note as to size-minimal hypercompletely separating systems}

\author{Barbora Bat\'\i kov\'a, Tom\'a\v s J.\  Kepka and Petr C.\ N\v emec}

\address{Barbora Bat\'\i kov\'a, Department of Mathematics, CULS,
Kam\'yck\'a 129, 165 21 Praha 6 - Suchdol, Czech Republic}
\email{batikova@tf.czu.cz}%
\address{Tom\'a\v s J.\ Kepka,  Faculty of Education, Charles University, M.\ Rettigov\'e 4, 116 39 Praha 1, Czech Republic}
\email{kepka@karlin.mff.cuni.cz}

\address{Petr C.\ N\v emec, Department of Mathematics, CULS,
Kam\'yck\'a 129, 165 21 Praha 6 - Suchdol, Czech Republic}
\email{nemec@tf.czu.cz}

\subjclass{05D05, 05B99, 06E99}

\keywords{separating system, size-minimal}

\begin{abstract}If $S$ is a non-empty finite set, $|S|=s$, then a system $\mathscr{A}$ of subsets of $S$ is a size-minimal hypercompletely separable system (i.e., for every $a\in S$ there are $A,B\in\mathscr{A}$ such that $A\cap B=\{a\}$) if and only if $|\mathscr{A}|=\left\lceil\frac{1+\sqrt{8s+1}}2\right\rceil$.
\end{abstract}

\maketitle

\newtheorem{ex}{Example}[section]
\newtheorem{obs}[ex]{Observation}
\newtheorem{remark}[ex]{Remark}
\newtheorem{calc}[ex]{Calculation}
\newtheorem{lemma}[ex]{Lemma}
\newtheorem{theorem}[ex]{Theorem}
\newtheorem{prop}[ex]{Proposition}
\newtheorem{const}[ex]{Construction}
\newtheorem{scholium}[ex]{Scholium}

\section{Introduction}
Separating systems of subsets of finite sets were introduced in \cite{R} and it was readily observed there that a separating system $\mathscr{A}$ (defined on an $s$-element set, $s\ge2$) is size-minimal if and only if $|\mathscr{A}|=\lceil\log_2s\rceil$. A bit later, completely separating systems appeared in \cite{D} and it was found in \cite{S} that such a system $\mathscr{B}$ is size-minimal if and only if $|\mathscr{B}|$ is the smallest positive integer $r$ satisfying the inequality $\binom{r}{\lfloor\frac r2\rfloor}\ge s$. (This achievement is based on a well known Sperner Theorem (\cite{Sp}).) A~particularly remarkable class of hypercompletely separating systems was thoroughly studied in \cite{BFPR} and size-minimal systems of this type were described in \cite{BKN}.

The aim of the present short (general) note is to show that a hypercompletely separating system $\mathscr{C}$ is size-minimal if and only if $|\mathscr{C}|=\left\lceil\frac{1+\sqrt{8s+1}}2\right\rceil$.

\section{Preliminaries}

Throughout this brief note, $S$ is a non-empty finite se,t $s=|S|\ge1$, $\mathcal{P}(S)$ denotes the Booelan algebra of subsets of $S$ and, for each $k$, $0\le k\le s$, $\mathcal{P}_k(S)=\{\,A\in\mathcal{P}(S)\,|\,|A|=k\,\}$. Any subset of $\mathcal{P}(S)$ is called a {\it system} (of subsets of $S$) ($0\le|\mathscr{A}|\le2^s$ for any system $\mathscr{A}$). If $\mathscr{A}$ is a system then $\overline{\mathscr{A}}=\{\,S\setminus A\,|\,A\in\mathscr{A}\,\}$ (we have $|\overline{\mathscr{A}}|=|\mathscr{A}|$). Subsets of $\mathcal{P}(\mathcal{P}(S))$ are called {\it ensembles} (of systems). 

Let $\mathfrak{A}$ be an ensemble of systems. A system $\mathscr{A}\in\mathfrak{A}$ is said to be {\it size-minimal} ({\it size-maximal}, resp.) in $\mathfrak{A}$ if $|\mathscr{A}|\le|\mathscr{B}|$ ($|\mathscr{B}|\le|\mathscr{A}|$, resp.) for any system $\mathscr{B}\in\mathfrak{A}$ and it is said to be {\it inclusion-minimal}  ({\it inclusion-maximal}, resp.) in $\mathfrak{A}$ if $\mathcal{A}=\mathcal{C}$ whenever $\mathcal{A}\in\mathfrak{A}$ is such that $\mathcal{C}\subseteq\mathcal{A}$ ($\mathcal{A}\subseteq\mathcal{C}$, resp.). 

A system $\mathscr{A}$ is called\begin{enumerate}
\item[-] {\it separating} if for each $T\in\mathcal{P}_2(S)$ there is $A\in\mathscr{A}$ such that $|A\cap T|=1$;
\item[-] {\it completely separating} if for each $T\in\mathcal{P}_2(S)$ there are $A,B\in\mathscr{A}$ such that $|A\cap T|=1=|B\cap T|$ and $A\cap T\ne B\cap T$;
\item[-] {\it hypercompletely separating} if for each $a\in S$ there are $A,B\in\mathscr{A}$ such that $A\cap B=\{a\}$ (notice that $A=B$ iff $A=B=\{a\}$).
\end{enumerate}

Obviously, every hypercompletely separating system is a non-empty completely separating system and every completely separating system is separating. Besides, a system $\mathscr{A}$ is (completely, resp.) separating if and only if it generates the Boolean algebra $\mathcal{P}(S)(\cap,\cup,\overline{\phantom{A}})$ (the lattice $\mathcal{P}(S)(\cap,\cup)$, resp.).

In the whole text, an {\it HCSP-system} will mean a hypercompletely separating system (for short).

Obviously, every size minimal HCSP-system is inclusion-minimal, but the converse is not true in general. 

E.g., the system $\{\{1,2,3\},\{1,3,4\},\{1,3,5\},\{1,4,5\},\{2,4,5\},\{3,4,5\}\}$ is an in\-clusion-minimal HCSP-system on $\{1,2,3,4,5\}$ and the system is not size-minimal. Generally, the system $\mathcal{P}_1(S)$ (of one-element subets of $S$) is always an inclusion-minimal HCSP-system, and this system is size-minimal only for $1\le s\le4$.

Let us collect a handful of easy observations  concerning HCSP-systems.

\begin{obs}\label{1.3} \rm {\rm(i)} Let $\mathscr{A}$ be a HCSP-system on $S$. Then $\bigcup\mathscr{A}=S$. If $s=|S|\ge2$ then $\bigcap\mathscr{A}=\emptyset$.\newline
{\rm(ii)} Let $\mathscr{A}\subseteq\mathscr{B}$, $\mathscr{A},\mathscr{B}$ systems on $S$. If $\mathscr{A}$ is HCSP then $\mathscr{B}$ is such.\newline
{\rm(iii)} If $T$ is a non-empty subset of $S$ and $\mathscr{A}$ is HCSP on $S$ then the restriction $\mathscr{A}\uparrow T=\{\,A\cap T\,|\,A\in\mathscr{A}\,\}$ is HCSP on $T$.\newline
{\rm(iv)} Let $S_1,\dots,S_k$, $k\ge1$, be pair-wise disjoint non-empty finite sets, $|S_i|=s_i\ge1$,  and let $\mathscr{A}_i$ be an HCSP-system on $S_i$, $1\le i\le k$. Then $\mathscr{A}=\bigcup\mathscr{A}_i$ is an HCSP-system on $S=\bigcup S_i$ and $|\mathscr{A}|=\sum|\mathscr{A}_i|$. Moreover, the system $\mathscr{A}$ is incusion-minimal on $S$ if and only if all $\mathscr{A}_i$ are inclusion-minimal on $S_i$. Further, $\mathscr{B}=\{\,A_1\times\dots\times A_k\,|\,A_i\in\mathscr{A}_i\,\}$ is an HCSP-system on $T=S_1\times\dots\times S_k$ and $|\mathscr{B}|=\prod|\mathscr{A}_i|$.\newline
{\rm(v)} The system $\mathcal{P}_1(S)$ of one-element subsets is always an inclusion-minimal HCSP-system. On the other hand, the system $\overline{\mathcal{P}_1(S)}$ (=$\mathcal{P}_{s-1}(S)$) is an HCSP-sustem just for $1\le s\le3$.\newline
{\rm(vi)} The system $\mathcal{P}_k(S)$, $0\le k\le s$, is an HCSP system if and only if $1\le k\le\frac{s+1}2$. In particular, $\mathcal{P}_2(S)$ is an HCSPsystem just for $s\ge3$.\end{obs}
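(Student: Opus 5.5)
Every part of the Observation follows directly from the definition: $\mathscr{A}$ is HCSP iff each $a\in S$ has a witness pair $A,B\in\mathscr{A}$ with $A\cap B=\{a\}$. The plan is to handle (i)--(vi) one by one by manipulating such witness pairs.

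For (i), a witness pair for $a$ shows that $a$ lies in some member, so $\bigcup\mathscr{A}=S$. If $s\ge2$, take $a\ne b$ and a witness pair $A,B$ for $a$. Then $b\notin A\cap B$, so $b\notin\bigcap\mathscr{A}$. Since $b$ was arbitrary, $\bigcap\mathscr{A}=\emptyset$.

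Parts (ii) and (iii) are immediate. Witnesses in $\mathscr{A}$ remain witnesses in $\mathscr{B}$. For $a\in T$ with witnesses $A,B$, we have $(A\cap T)\cap(B\cap T)=\{a\}$.

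For (iv), the basic remark is that a witness pair for $a\in S_i$ inside $\bigcup\mathscr{A}_j$ must consist of members of $\mathscr{A}_i$: a set in $\mathscr{A}_j$ with $j\ne i$ lies in $S_j$, so it misses $a$. This gives the HCSP property of $\bigcup\mathscr{A}_i$. It also gives the inclusion-minimality equivalence, since deleting a set from $\mathscr{A}_i$ destroys the HCSP property on $S$ iff it destroys it on $S_i$.

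The size formula needs the $\mathscr{A}_i$ to be pairwise disjoint as systems. Non-empty members of different $\mathscr{A}_i$ are distinct because the $S_i$ are disjoint. The only possible collision is $\emptyset$, which is never used as a witness. So I would state the count under the tacit assumption that $\emptyset\notin\mathscr{A}_i$, which holds in the inclusion-minimal case.

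For the product, take $(a_1,\dots,a_k)$ with witness pairs $A_i,B_i$ for $a_i$. Then
\[(A_1\times\dots\times A_k)\cap(B_1\times\dots\times B_k)=(A_1\cap B_1)\times\dots\times(A_k\cap B_k)=\{(a_1,\dots,a_k)\}.\]
The count $|\mathscr{B}|=\prod|\mathscr{A}_i|$ uses that $(A_1,\dots,A_k)\mapsto A_1\times\dots\times A_k$ is injective when all factors are non-empty. Again this needs $\emptyset\notin\mathscr{A}_i$, and this caveat is the only real subtlety I expect in the whole Observation.

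For (vi), a witness pair for $a$ in $\mathcal{P}_k(S)$ requires $k\ge1$ and two $k$-sets meeting exactly in $a$. Their union has $2k-1$ elements, so such a pair exists iff $2k-1\le s$, i.e.\ $1\le k\le\frac{s+1}2$. For $k=2$ this reads $s\ge3$.

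For (v), only the set $\{a\}$ of $\mathcal{P}_1(S)$ contains $a$, so $\{a\}$ is its own unique witness. Removing any member therefore destroys the HCSP property, which gives inclusion-minimality. For $\mathcal{P}_{s-1}(S)$, apply (vi) with $k=s-1$: the condition becomes $s\le3$, and the hypothesis $k\ge1$ forces $s\ge2$. For $s=1$ the system is $\{\emptyset\}$, which is not HCSP, so I would record that the stated range should really be read as $2\le s\le3$, or that $s=1$ is a degenerate convention.
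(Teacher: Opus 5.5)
Your verification is correct and is the routine definition-chasing the paper has in mind; the paper states this Observation without proof, as a collection of easy facts. Your two caveats are genuine corrections to the statement as printed. First, the counts $|\mathscr{A}|=\sum|\mathscr{A}_i|$ and $|\mathscr{B}|=\prod|\mathscr{A}_i|$ in (iv) can fail when $\emptyset$ belongs to some $\mathscr{A}_i$ (e.g.\ $k=2$ and $\mathscr{A}_i=\{S_i,\emptyset\}$ with $|S_i|=1$). Second, for $s=1$ the system $\overline{\mathcal{P}_1(S)}=\{\emptyset\}$ is not HCSP, so the range in (v) should be $2\le s\le 3$, which is consistent with (vi).
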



\section{Preparatory results}

For every non-negative integer $k$ we denote by $\alpha(k)$ the $k$-th triangular number. I.e., $\alpha(k)=\frac{k(k+1)}2$ $(=\binom{k+1}{2}=\sum_{i=0}^k i$). Clearly, $\alpha(k+1)=\alpha(k)+k+1$.

For every non-negative integer $k$ let $\tau(k)$ designate the smallest (positive) integer such that $\tau(k)\ge\frac{1+\sqrt{8k+1}}2$ ($\tau(k)=\left\lceil\frac{1+\sqrt{8k+1}}2\right\rceil$).

\begin{lemma}\label{2.2} Let $k$ be a non-negative integer. Then:\newline
{\rm(i)} $\tau(\alpha(k))=k+1$ .\newline
{\rm(ii)} If $t$ is an integer such that $\alpha(k)+1\le t\le\alpha(k+1)$ then $\tau(t)=k+2$.\newline
{\rm(iii)} $\tau(k)=\frac{1+\sqrt{8k+1}}2$ if and only if $k=\alpha(l)$ for some $l\ge0$.\newline
{\rm(iv)} $k\ge\tau(k)$ for $k\ge3$, and $k>\tau(k)$ for $k\ge5$.\end{lemma}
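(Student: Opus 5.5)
The plan is to reduce everything to one fact: the real function $f(x)=\frac{1+\sqrt{8x+1}}2$ is strictly increasing on $[0,\infty)$, and it inverts the triangular numbers with a shift. Indeed, for every integer $m\ge0$,
$$\alpha(m)=\frac{m(m+1)}2 \quad\Longrightarrow\quad 8\alpha(m)+1=4m^2+4m+1=(2m+1)^2,$$
so $f(\alpha(m))=\frac{1+(2m+1)}2=m+1$. Items (i) and (iii) follow from this identity, (ii) follows from it together with monotonicity, and (iv) is a short check using (i) and (ii).

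For (i), we have $f(\alpha(k))=k+1$. This is an integer, so $\tau(\alpha(k))=\lceil k+1\rceil=k+1$.

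For (ii), take $\alpha(k)+1\le t\le\alpha(k+1)$. Then $t>\alpha(k)$, so strict monotonicity gives $f(t)>f(\alpha(k))=k+1$. Also $t\le\alpha(k+1)$ gives $f(t)\le f(\alpha(k+1))=k+2$. Hence $k+1<f(t)\le k+2$, and the ceiling is $k+2$.

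For (iii), the direction "if" is (i). For "only if", note that every non-negative integer lies either at some $\alpha(l)$ or strictly between consecutive triangular numbers. This holds because $\alpha(0)=0$ and $\alpha$ strictly increases to infinity. In the strict case $\alpha(l)<k<\alpha(l+1)$, monotonicity gives $l+1<f(k)<l+2$. So $f(k)$ is not an integer, and therefore $f(k)\neq\tau(k)$.

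For (iv), compute small values from (i) and (ii). Since $\alpha(2)=3$, we get $\tau(3)=3$. Since $4,5,6$ lie in $[\alpha(2)+1,\alpha(3)]$, we get $\tau(4)=\tau(5)=\tau(6)=4$. For general $k\ge5$, let $m$ be such that $\alpha(m)+1\le k\le\alpha(m+1)$, so $\tau(k)=m+2$ and $m\ge2$. It suffices that $\alpha(m)+1>m+2$, i.e. $m(m+1)>2m+2$, i.e. $m>2$. The boundary case $m=2$ means $k\in\{5,6\}$, which the direct values already handle ($5>4$, $6>4$). Together with $\tau(3)=3$ and $\tau(4)=4$, this gives $k\ge\tau(k)$ for all $k\ge3$.

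I expect no real obstacle here. The only care needed is the bookkeeping in (iv) at small $k$, and making sure the partition of the integers into intervals $(\alpha(l),\alpha(l+1)]$ is stated correctly; note that $k=0=\alpha(0)$ needs no separate treatment in (iii).
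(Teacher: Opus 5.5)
Your proof is correct: the identity $8\alpha(m)+1=(2m+1)^2$ together with strict monotonicity of $x\mapsto\frac{1+\sqrt{8x+1}}2$ gives (i)--(iii), and your case analysis for (iv), including the boundary case $m=2$ (i.e.\ $k\in\{5,6\}$), is sound. The paper's proof is just the word ``straightforward'', and your argument is the routine verification it leaves to the reader.
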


\begin{proof} Straightforward.\end{proof}

\begin{prop}\label{2.3} Let $S$ be a finite set, $s=|S|\ge3$. Then $|\mathscr{A}|\ge\tau(s)$ for every HCSP-system on $S$. Moreover, if $|\mathscr{A}|=\tau(s)$ then $\mathscr{A}$ is size-minimal and if $s\ge5$ then $\mathcal{P}_1(S)\cap\mathscr{A}=\emptyset$.\end{prop}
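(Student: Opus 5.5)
The plan is to count pairs. Let $\mathscr{A}$ be an HCSP-system on $S$ with $n=|\mathscr{A}|$. For each $a\in S$ choose $A_a,B_a\in\mathscr{A}$ with $A_a\cap B_a=\{a\}$. This defines a map $a\mapsto\{A_a,B_a\}$ into the set of unordered pairs (possibly degenerate, i.e.\ $A_a=B_a=\{a\}$) of members of $\mathscr{A}$. The map is injective: if $\{A_a,B_a\}=\{A_b,B_b\}$ then $\{a\}=A_a\cap B_a=A_b\cap B_b=\{b\}$.

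To bound the number of targets, let $m$ be the number of $a$ with $A_a=B_a$, that is, with $\{a\}\in\mathscr{A}$. Then $m\le|\mathcal{P}_1(S)\cap\mathscr{A}|\le n$. The remaining $s-m$ elements are sent injectively into genuine $2$-subsets of $\mathscr{A}$, so $s\le m+\binom n2\le n+\binom n2=\alpha(n)$. On the other hand, $\tau(s)$ is the least integer $r$ with $r\ge\frac{1+\sqrt{8s+1}}2$, which is equivalent to $\binom r2=\alpha(r-1)\ge s$; this follows by solving the quadratic $r^2-r-2s\ge0$, or directly from Lemma~\ref{2.2}(i),(ii). So I first need the sharper inequality $s\le\binom n2$, not just $s\le\alpha(n)$.

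To get it, I would treat the singletons in $\mathscr{A}$ separately. Suppose $\{a\}\in\mathscr{A}$ and $s\ge3$. I claim $\{a\}$ can be replaced without loss. Since $s\ge2$, $\mathscr{A}$ must contain some set $C\ni a$ besides $\{a\}$; otherwise $\bigcup\mathscr{A}=S$ together with the witnesses for the other elements still works, but counting must be done carefully. The cleaner route is a refined count. Let $\mathscr{A}_1=\mathcal{P}_1(S)\cap\mathscr{A}$ with $|\mathscr{A}_1|=m$, and let $S_1=\bigcup\mathscr{A}_1$. Elements of $S\setminus S_1$ must be separated by genuine pairs of sets from $\mathscr{A}$, and a pair involving a singleton $\{a\}$ can only separate $a$. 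Hence $S\setminus S_1$ injects into $2$-subsets of $\mathscr{A}\setminus\mathscr{A}_1$, giving $s-m\le\binom{n-m}2$. Then $s\le m+\binom{n-m}2$. Because $\binom{n-m}2+m\le\binom n2$ whenever $n\ge2$ (the difference is $m(n-1)-\binom{m+1}{2}+\dots\ge0$, a routine check that it is $\ge0$ for $0\le m\le n$), we get $s\le\binom n2$, hence $n\ge\tau(s)$. The hypothesis $s\ge3$ guarantees $n\ge3$ so the degenerate small cases do not interfere. I expect this bookkeeping to be the main obstacle: checking precisely that $m+\binom{n-m}2\le\binom n2$, with equality exactly when $m\in\{0,1\}$ or the small cases $n\le3$.

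Once $|\mathscr{A}|\ge\tau(s)$ is proved for every HCSP-system, any $\mathscr{A}$ with $|\mathscr{A}|=\tau(s)$ is trivially size-minimal. For the last claim, assume $s\ge5$, $n=\tau(s)$ and $m\ge1$. Then $s\le m+\binom{n-m}2$. Since $n=\tau(s)$ means $s>\binom{n-1}2$, it suffices to show $m+\binom{n-m}2\le\binom{n-1}2$ for $m\ge1$. For $m=1$ this is equality, so I need a strictly finer argument there. The singleton $\{a\}$ separates only $a$, yet $a$ can equally be separated by a genuine pair, since $s\ge2$ forces some other set to contain $a$, or else $a$ is handled anyway. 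I would instead use that if $\{a\}\in\mathscr{A}$ then, for $s\ge5$, the restriction to $S\setminus\{a\}$ of $\mathscr{A}\setminus\{\{a\}\}$ is HCSP by Observation~\ref{1.3}(iii). That yields $n-1\ge\tau(s-1)$, and the remaining task is to compare $\tau(s-1)$ with $\tau(s)-1$ via Lemma~\ref{2.2}(ii), using Lemma~\ref{2.2}(iv) to exclude small exceptional values. Ruling out equality at the boundary $s=\alpha(k)+1$ is where I expect the real difficulty; there I would use the explicit counting injection to force a contradiction.
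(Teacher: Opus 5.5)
Your proof of the lower bound is correct and is the paper's counting argument, merged into one inequality instead of the paper's three cases ($t=0$, $r=0$, mixed). A singleton $\{a\}\in\mathscr{A}$ can only separate $a$, so $s\le m+\binom{n-m}2$. Moreover $\binom n2-\binom{n-m}2-m=\frac{m(2n-m-3)}2\ge0$ for $0\le m\le n$ once $n\ge3$, and your own bound forces $n\ge3$ (for $n\le2$ it gives $s\le2$). There are two slips here. The inequality fails for $n=m=2$, so ``whenever $n\ge2$'' is wrong. Also, equality holds only for $m=0$ or $m=n=3$, not for $m=1$. Size-minimality of systems with $|\mathscr{A}|=\tau(s)$ then follows at once.

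The genuine gap is the final clause, and it cannot be closed, because that clause is false. With $n=\tau(s)$ you have $\binom{n-1}2<s\le m+\binom{n-m}2$. For $s\ge5$ and $m\ge2$ this is impossible: at $m=2$ the right side is at most $\binom{n-1}2$, at $m=n$ it is $n=\tau(s)<s$, and the right side is convex in $m$. For $m=1$, however, it only pins down $s=\binom{n-1}2+1=\alpha(n-2)+1$; your ``equality'' is really an excess of exactly one. Your restriction argument then yields $n-1\ge\tau(\alpha(n-2))=n-1$, which is no contradiction.

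No explicit counting injection can produce one. For $s=7$ the system $\{\{1\},\{2,3,4\},\{2,5,6\},\{3,5,7\},\{4,6,7\}\}$ is HCSP with $5=\tau(7)$ members, one of them a singleton. In general, adjoining a new point $a$ and the set $\{a\}$ to a system as in Proposition~\ref{3.1} on $\alpha(k)$ points gives such a counterexample on $\alpha(k)+1$ points for every $k\ge3$. For $k=2$ this is $\mathscr{A}_2$ of Proposition~\ref{3.10}(iv).

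The paper's proof breaks at the same place. In its case (iii) it shows $|\mathscr{A}|=r+q>\frac{1+\sqrt{8s+1}}2$ and reads this as $|\mathscr{A}|>\tau(s)$. That inference only gives $|\mathscr{A}|\ge\tau(s)$ unless $8s+1$ is a perfect square.

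What your computation does prove is the correct version: for $s\ge5$, a system with $|\mathscr{A}|=\tau(s)$ contains at most one singleton, and none unless $s=\alpha(k)+1$ for some $k\ge3$.
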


\begin{proof} Put $m=|\mathscr{A}|$, $R=\{\,a\in S\,|\,\{a\}\in\mathscr{A}\,\}$, $r=|R|$, $T=S\setminus R$, $t=|T|$; we have $r+t=s$. The rest of the proof is divided into three parts:\newline
(i) $t=0$. Then $T=\emptyset$, $R=S$, $r=s$, $\mathcal{P}_1(S)\subseteq\mathscr{A}$, $m\ge s$. Since $s\ge3$, we have $m\ge s\ge\tau(s)$ (\ref{2.2}(iv)) ($m>\tau(S)$ for $s\ge5$).\newline
(ii) $r=0$. Then $R=\emptyset$, $T=S$, $t=s$ and $|A|\ge2$ for each $A\in\mathscr{A}$. If $a\in S$ then $\{a\}=A_1\cap A_2$ for some $A_1,A_2\in\mathscr{A}$. Clearly, $A_1\ne A_2$ and we see that ($|\mathcal{P}_2(\mathscr{A})|=\binom{m}{2}=$) $\frac{(m-1)m}2\ge s$, $m^2-m\ge2s$, $(2m-1)^2=4m^2-4m+1\ge8s+1$, $m\ge\left\lceil\frac{1+\sqrt{8s+1}}2\right\rceil=\tau(s)$.\newline
(iii) $r\ne0\ne t$. We get $\mathscr{A}=\mathscr{R}\cap\mathscr{B}$, $\mathscr{R}=\{\,\{a\}\,|\,a\in R\,\}$, $\mathscr{B}=\mathscr{A}\setminus\mathscr{R}$, $m=|\mathscr{A}|=r+q$, $q=|\mathscr{B}|$, $t\le\frac{q(q-1)}2$, $2t\le q^2-q$, $q\ge2$ and we verify that $m>\tau(s)$. Indeed, $r+q=m>\frac{1+\sqrt{8(r+1)+1}}2$ if and only if $(2r+2q-1)^2>8r+8t+1$, which is equivalent to $4r^2+r(8q-12)+4q^2-4q>8t$. As $2t\le q^2-q$, it would be sufficient to check that $4r^2+r(8q-12)+4q^2-4q>4q^2-4q$ or $4r^2+r(8q-12)>0$. But $r\ge1$, $q\ge2$ and $4r^2+r(8q-12)\ge4+4=8$.\end{proof}

\begin{scholium} \rm Let $k\ge0$, $\beta(k)=\frac{1+\sqrt{8k+1}}2$ ($\tau(k)=\lceil\beta(k)\rceil$) and $\gamma(k)=\sqrt{2k}$. We have $\beta(k)^2=2k+\beta(k)=\gamma(k)^2+\beta(k)$. Thus $\beta(k)=\beta(k)^2-\gamma(k)^2=(\beta(k)-\gamma(k))\cdot(\beta(k)+\gamma(k))$. If $k\ge1$ then $1<\gamma(k)<\beta(k)$. Assume, for a~moment, that $\beta(k)\ge\gamma(k)+1$. Then $2k+\beta(k)=\beta(k)^2\ge\gamma(k)^2+2\gamma(k)+1=2k+2\gamma(k)+1$, $\frac{1+\sqrt{8k+1}}2=\beta(k)\ge2\gamma(k)+1=1+\sqrt{8k}$, $\sqrt{8k+1}+1\ge\sqrt{32k}+2$, $\sqrt{8k+1}\ge\sqrt{32k}+1$, $8k+1>32k$, $1>24k$, which is absurd. It means that $1<\gamma(k)<\beta(k)<\gamma(k)+1$.

We have shown that for every $k\ge0$ either $\tau(k)=\lceil\sqrt{2k}\rceil$ or $\tau(k)=\lceil\sqrt{2k}\rceil+1$. The sequence of numbers $k$ such that $\tau(k)=\lceil\sqrt{2k}\rceil$ starts as follows:\newline
$1,3, 5, 6 ,9,10,13,14,15,19,20,21,25,26,27,28,\dots$\,.
\end{scholium}

\begin{prop}\label{3.10} Let $i=1,2,3,4,5,6$ and $S_i=\{1,2,\dots,i\}$, $|S_i|=i$. Then:\newline
{\rm(i)} $\mathcal{P}_1(S_1)=\{\{1\}\}$ is the only size-minimal HCSP-system on $S_1$ and $|\mathcal{P}_1(S_1)|=1<2=\tau(1)$.\newline
{\rm(ii)} $\mathcal{P}_1(S_2)=\{\{1\},\{2\}\}$ is the only size-minimal HCSP-system on $S_2$ and $|\mathcal{P}_1(S_2)|=2<3=\tau(2)$.\newline
{\rm(iii)} $\mathcal{P}_1(S_3)=\{\{1\},\{2\},\{3\}\}$ and $\mathcal{P}_2(S_3)=\overline{\mathcal{P}_1(S_3)}=\{\{1,2\},\{1,3\},\{2,3\}\}$ are the only size-minimal HCSP-systems on $S_3$ and $|\mathcal{P}_1(S_3)|=|\mathcal{P}_2(S_3)|=3=\tau(3)$.\newline
{\rm(iv)} The systems $\mathscr{A}_1=\{\{1\},\{2\},\{3\},\{4\}\}$ ($=\mathcal{P}_1(S_4)$), $\mathscr{A}_2=\{\{1\},\{2,3\},\{2,4\},$ $\{3,4\}\}$, $\mathscr{A}_3=\{\{1,2\},\{1,3\},\{2,4\},\{3,4\}\}$, $\mathscr{A}_4=\{\{1\},\{2,3\},\{3,4\},\{1,2,4\}\}$ and $\mathscr{A}_5=\{\{1,2\},\{1,3\},\{1,4\},\{2,3,4\}\}$ are, up to isomorphism, all size-minimal\linebreak HCSP-systems on $S_4$. In all the cases, we have $|\mathscr{A}|=4=\tau(4)$. Besides, $\overline{\mathscr{A}_2}\cong\mathscr{A}_5$, $\overline{\mathscr{A}_3}\cong\mathscr{A}_3$, $\overline{\mathscr{A}_4}\cong\mathscr{A}_4$ and $\overline{\mathscr{A}_5}\cong\mathscr{A}_2$. On the other hand $\overline{\mathscr{A}_1}=\mathcal{P}_3(S_4)$ is not an HCSP-system.\newline
{\rm(v)} Up to isomorphism, $\mathscr{A}_6=\{\{1,2,3\},\{1,4,5\},\{2,4\},\{3,5\}\}$ is the only size-minimal HCSP-system on $S_5$, $|\mathscr{A}_6|=4=\tau(5)$ and $\overline{\mathscr{A}_6}\cong\mathscr{A}_6$.\newline
{\rm(vi)} Up to isomorphism, $\mathscr{A}_7=\{\{1,2,3\},\{1,4,5\},\{2,4,6\},\{3,5,6\}\}$ is the only size-minimal HCSP-system on $S_6$, $|\mathscr{A}_7|=4=\tau(6)$ and $\overline{\mathscr{A}_7}\cong\mathscr{A}_7$.
\end{prop}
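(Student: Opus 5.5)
The plan is to combine the lower bound of Proposition~\ref{2.3} with a direct analysis of which pairs of members realize each point. For $s\ge 3$, Proposition~\ref{2.3} gives $|\mathscr{A}|\ge\tau(s)$, and $\tau(3)=3$, $\tau(4)=\tau(5)=\tau(6)=4$. So it suffices to exhibit the listed systems, verify that they are HCSP, and classify all HCSP-systems of exactly that size. For $s=1,2$ I argue directly. A one-member system $\{A\}$ realizes a point only via $A\cap A$, which forces $A=\{a\}$. Hence for $s=1$ the only option is $\{\{1\}\}$. For $s=2$ one set is too few. A two-member system $\{A,B\}$ can realize both points only if both are singletons, since a non-singleton member meets the other member in at most one additional candidate point. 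This gives (i) and (ii).

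The basic bookkeeping tool is the following. Let $R$ be the set of points $a$ with $\{a\}\in\mathscr{A}$, put $r=|R|$, and let $\mathscr{B}$ be the set of non-singleton members, $q=|\mathscr{B}|$. A singleton $\{a\}$ meets any member in $\{a\}$ or $\emptyset$, so it realizes no point other than $a$. Hence every point of $T=S\setminus R$ is $B_1\cap B_2$ for a pair of distinct $B_1,B_2\in\mathscr{B}$, and therefore $s-r\le\binom q2$ with $r+q=m$. I will not rely on part~(iii) of the proof of Proposition~\ref{2.3}: the case $r=1$, $q=3$, $s=4$ (the system $\mathscr{A}_2$) shows that a mixed configuration can have size exactly $\tau(s)$.

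For $s=3$, $m=3$ the inequality leaves $r=3$ (giving $\mathcal{P}_1$) or $r=0$. In the latter case the three pairwise intersections are three distinct singletons covering $S$. No point lies in all three sets, since otherwise two intersections would coincide. So every point lies in exactly two sets, and the complements are singletons, i.e.\ $\mathscr{A}=\mathcal{P}_2(S_3)$. For $s=5,6$ with $m=4$, the inequality forces $r=0$, since $r\ge1$ gives $q\le3$ and $s\le r+3\le4$.
\begin{itemize}
\item For $s=6$, all six pairwise intersections are distinct singletons. Thus every point lies in exactly two members, which is the edge--vertex incidence of $K_4$, and this is $\mathscr{A}_7$.
\item For $s=5$, five pairs give distinct singletons $\{a_{ij}\}$. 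A point lying in three members would make two of those intersections coincide, so I will show the sixth intersection is empty and every point lies in exactly two members. This is $K_4$ minus an edge, which up to relabelling is $\mathscr{A}_6$.
\end{itemize}
Self-complementarity is then checked by writing down the complements and matching incidences.

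The main obstacle is $s=4$, $m=4$, where several cases survive.
\begin{itemize}
\item $r=4$ gives $\mathscr{A}_1$.
\item $r=3$ is impossible, since $q=1$.
\item $r=2$ is impossible, since $q=2$ gives $\binom22=1<2$ points of $T$ realized.
\item $r=1$: the three non-singletons pairwise meet in the three distinct points of $T$, each of which lies in exactly two of them. The only freedom is which of them contain the point of $R$. Containing it in none gives $\mathscr{A}_2$. Containing it in one gives $\mathscr{A}_4$. Containing it in two or more would spoil an intersection.
\item $r=0$: four sets of size at least $2$, with four of the six pairwise intersections being distinct singletons.
\end{itemize}
In the case $r=0$ I would enumerate by the degree sequence of points, i.e.\ how many members contain each point. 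Points of degree $3$ are allowed only when the intersections they create are not the ones used. Up to isomorphism this should yield exactly $\mathscr{A}_3$ (the $4$-cycle) and $\mathscr{A}_5$ (a triangle plus the point of degree $3$). This enumeration is where errors are most likely, so I would organize it carefully by the number of points of degree $\ge3$. The complement relations listed in (iv) are then direct verifications.
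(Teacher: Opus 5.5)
Your proposal is correct, but it is organized differently from the paper's proof. The paper calls (i)--(iv) obvious. It gets (vi) from Proposition~\ref{3.1}: each point has a unique realizing pair and all members have size $3$, so the system is the dual of $K_4$. It then reduces (v) to (vi). In a size-minimal system on $S_5$ some two members are disjoint; adding the point $6$ to both gives a $4$-member HCSP-system on $S_6$, hence a copy of $\mathscr{A}_7$, and restricting back gives $\mathscr{A}_6$.

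You instead use one count, $s-r\le\binom q2$ with $r+q=m$, to force $r=0$ for $s=5,6$. Then one degree argument (a point in three members would give two used pairs the same singleton intersection) yields the dual of $K_4$ for $s=6$ and the dual of $K_4$ minus an edge for $s=5$ directly.

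For (vi) this is the paper's idea. For (v) your argument is self-contained. The paper's reduction is shorter, but it tacitly needs two facts: the existence of a disjoint pair (which is exactly your counting) and the point-transitivity of $\mathscr{A}_7$ in the restriction step.

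Deriving $r=0$ by hand also spares you the last clause of Proposition~\ref{2.3}, on which Proposition~\ref{3.1}(ii) rests. Your distrust of case (iii) there is well founded. That computation only gives $m>\frac{1+\sqrt{8s+1}}2$, not $m>\tau(s)$. The clause itself even fails at $s=7$, where $\mathscr{A}_7\cup\{\{7\}\}$ has $5=\tau(7)$ members.

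There are two loose ends. First, the chain $s\le r+3\le 4$ is valid only for $r=1$. What you need is $s\le r+\binom{4-r}{2}\le 4$ for $1\le r\le 4$.

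Second, the case $s=4$, $r=0$ is only announced, though your degree idea closes it at once. Let $N(a)$ be the set of members containing $a$, and write the members as $A,B,C,D$. A pair realizes $a$ iff it lies in $N(a)$ and in no other $N(b)$. So the three pairs inside a $3$-element $N(a)$ can serve no point other than $a$; that is the correct reading of your remark on degree-$3$ points. The cases then go as follows.
\begin{enumerate}
\item[(a)] $|N(a)|=4$ is impossible, since then no other point has a private pair.
\item[(b)] If $N(a)=\{A,B,C\}$, the other three points $b,c,d$ need distinct private pairs among $\{A,D\},\{B,D\},\{C,D\}$. Privacy forces $N(b)=\{A,D\}$, $N(c)=\{B,D\}$, $N(d)=\{C,D\}$. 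Thus $A=\{a,b\}$, $B=\{a,c\}$, $C=\{a,d\}$, $D=\{b,c,d\}$, which is $\mathscr{A}_5$.
\item[(c)] Otherwise every $N(a)$ has two elements, and privacy makes them four distinct edges of $K_4$ on the vertex set $\mathscr{A}$. All vertex degrees (the member sizes) are at least $2$ and sum to $8$, so the graph is $2$-regular on four vertices, i.e.\ a $4$-cycle. This is $\mathscr{A}_3$.
\end{enumerate}
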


\begin{proof} (i) -- (iv). This is obvious.\newline
(vi) By \ref{2.3}, $\mathscr{A}_7$ is a size-minimal HCSP-system on $S_6$. If $\mathscr{A}$ is a size-minimal HCSP-system on $S_6$ then $|\mathscr{A}|=4$ and $|A|=3$ for every $A\in\mathscr{A}$ by \ref{3.1}(iv). Further, \ref{3.1}(ii) implies that for every $a\in S_6$ there are uniquely determined $A,B\in\mathscr{A}$ such that $A\cap B=\{a\}$, and hence $\mathscr{A}\cong\mathscr{A}_7$.\newline
(v) As $|\mathscr{A}_6|=4=\tau(5)$, $\mathscr{A}_7\uparrow S_5=\mathscr{A}_6$ is a size-minimal HCSP-system on $S_5$ by \ref{2.3}. Let $\mathscr{A}=\{A_1,A_2,A_3,A_4\}$ be a size-minimal HCSP system on $S_5$. Then, e.g., $A_3\cap A_4=\emptyset$ and $\mathscr{B}=\{A_1,A_2,A_3\cup\{6\},A_4\cup\{6\}\}$ is a size-minimal HCSP-system on $S_6$, hence $\mathscr{B}\cong\mathscr{A}_7$ by (vi) and $\mathscr{A}=\mathscr{B}\uparrow S_5\cong\mathscr{A}_6$.\end{proof}

In the rest of this section, let $k\ge3$ and let $S$ be a finite set such that $s=|S|=\alpha(k)$ ($\ge6$). Furthermore, let $\mathscr{A}$ be an HCSP-system (defined on $S$) such that $|\mathscr{A}|=\tau(s)=k+1$ (cf. \ref{2.2}(i)).

\begin{prop}\label{3.1} {\rm(i)} $\mathscr{A}$ is size-minimal (on $S$).\newline
{\rm(ii)} For every $a\in S$ there is a uniquely determined pair $\{A,B\}$ $(\in\mathcal{P}_2(\mathscr{A}))$ such that $A\cap B=\{a\}$.\newline
{\rm(iii)} $|A\cap B|=1$ for all $A,B\in\mathscr{A}$, $A\ne B$.\newline
{\rm(iv)} $|A|=k$ for every $A\in\mathscr{A}$.\end{prop}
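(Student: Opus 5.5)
(i) follows from Proposition \ref{2.3}: $s=\alpha(k)\ge6\ge3$ and $|\mathscr{A}|=\tau(s)$, so $\mathscr{A}$ is size-minimal. Since $s\ge5$, the same proposition also gives $\mathcal{P}_1(S)\cap\mathscr{A}=\emptyset$. Hence every $A\in\mathscr{A}$ has $|A|\ge2$, and whenever $A\cap B=\{a\}$ with $A,B\in\mathscr{A}$ we must have $A\ne B$.

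For (ii) the plan is a counting argument. Consider the map $\varphi$ that sends each $a\in S$ to some pair $\{A,B\}\in\mathcal{P}_2(\mathscr{A})$ with $A\cap B=\{a\}$; such a pair exists because $\mathscr{A}$ is HCSP, and $A\ne B$ by the remark above. The map is injective, since the pair determines $a$ as the unique element of $A\cap B$. Now $|\mathcal{P}_2(\mathscr{A})|=\binom{k+1}{2}=\alpha(k)=s$, so $\varphi$ is a bijection. In particular, no two distinct elements of $S$ can share a pair. More strongly, if some $a$ were realised by two different pairs, the injection could be modified to use the second pair, and since both pairs would then be used by $a$ alone, the count $|\{\text{pairs } \{A,B\} \text{ with } |A\cap B|=1\}|\le s$ would be violated: every pair with singleton intersection corresponds to exactly one element. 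So I will phrase (ii) directly as follows: the assignment $\{A,B\}\mapsto$ the unique element of $A\cap B$, defined on the set $\mathcal{Q}$ of pairs with $|A\cap B|=1$, is surjective onto $S$ with $|\mathcal{Q}|\le s=|S|$. Hence it is bijective and $\mathcal{Q}=\mathcal{P}_2(\mathscr{A})$. This gives uniqueness in (ii) and also (iii) immediately.

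For (iv), fix $A\in\mathscr{A}$ and write $A=\bigcup_{B\ne A}(A\cap B)$ plus possibly extra points. The sets $A\cap B$ for the $k$ sets $B\ne A$ are singletons by (iii), and they are pairwise distinct by injectivity in (ii); this gives $|A|\ge k$. To get $|A|\le k$, take $a\in A$. By (ii), $a=C\cap D$ for a unique pair $\{C,D\}$. If $A\notin\{C,D\}$, then $A\cap C$ is a singleton containing $a$, so $A\cap C=\{a\}$, and $\{A,C\}$ is a second pair realising $a$, contradicting uniqueness. Hence $A\in\{C,D\}$, and $a$ is of the form $A\cap B$ with $B\ne A$. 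Thus $|A|=k$.

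The main subtlety I expect is the bijection step in (ii). One has to make sure that pairs $\{A,A\}$ are excluded, which is where the absence of singletons from Proposition \ref{2.3} (valid because $s\ge6$) is used, and one has to argue carefully that equality of cardinalities forces every pair to have singleton intersection.
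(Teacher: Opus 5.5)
Your proof is correct and follows the paper's route. You exclude singletons via Proposition~\ref{2.3}, use $|\mathcal{P}_2(\mathscr{A})|=\binom{k+1}{2}=s$ to make the witnessing assignment a bijection (giving (ii) and (iii)), and prove (iv) by the paper's argument, correctly getting $|A|\ge k$ from the distinctness of the sets $A\cap B$ where the paper's text has the inequality reversed.

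One caveat: the clause of Proposition~\ref{2.3} excluding singletons for $s\ge5$ is false in general. For example, $\mathscr{A}_7\cup\{\{7\}\}$ is an HCSP-system on $\{1,\dots,7\}$ with $5=\tau(7)$ members. Your appeal to it is legitimate here only because $s=\alpha(k)$ makes $\frac{1+\sqrt{8s+1}}2=k+1$ an integer, so the strict inequality in case (iii) of that proof really yields $|\mathscr{A}|>\tau(s)$.
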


\begin{proof} (i) It follows from \ref{2.3}.\newline
(ii) We have $|\mathcal{P}_2(\mathscr{A}|=\binom{\tau(s)}{2}=\binom{k+1}{2}=\alpha(k)=s=|S|$ (see also \ref{2.3}).\newline
(iii) It follows from (ii).\newline
(iv) If $A\in\mathscr{A}$ then $|\mathscr{A}\setminus\{A\}|=|\mathscr{A}|-1=k$ and, by (iii), $|A\cap B|=1$ for each $B\in\mathscr{A}\setminus\{A\}$. Besides, by (ii), if $B_1,B_2\in\mathscr{B}$, $B_1\ne B_2$, then $A\cap B_1\ne A\cap B_2$. Now, it is visible that $|A|\le k$. Put $A_1=\{\,a_B\,|\,\{a_B\}=A\cap B, B\in\mathscr{A}\setminus\{A\}\,\}$. We have $A_1\subseteq A$ and $|A_1|=k$. If $a\in A$ then $\{a\}=C\cap D$ for suitable $C,D\in\mathscr{A}$. Using (ii), we conclude that $A\in\{C,D\}$, and hence $a\in A_1$. Thus $A_1=A$.\end{proof}

\begin{const}\label{3.8} \rm Let $S=\{a_1,\dots,a_s\}$, $\mathscr{A}=\{A_1,\dots,A_{k+1}\}$, $s=\alpha(k)=\frac{k(k+1)}2$ ($\ge6$). We have $\alpha(k+1)-\alpha(k)=k+1$ and we put $s^+=\alpha(k+1)=s+k+1$, $S^+=S\cup\{a_{s+1},\dots,a_{S^+}\}=\{a_1,\dots,a_{s^+}\}$, $|S^+|=s^+$, $A_1^+=A_1\cup\{a_{s+1}\}$, $A_2^+=A_2\cup\{a_{s+2}\}$, $\dots$, $A_l^+=A_l\cup\{a_{s+l}\}$, $\dots$, $A_{k+1}^+=A_{k+1}\cup\{a_{s+k+1}\}$ ($=A_{k+1}\cup\{a_{s^+}\}$), $A_{k+2}^+=\{a_{s+1},a_{s+2},\dots,a_{s+k+1}\}$,  $s+k+1=s^+$, $1\le l\le k+1$. Furthermore, $\mathscr{A}^+=\{A_1^+,\dots,A_{k+2}^+$, $|\mathscr{A}^+|=k+2=\tau(\alpha(k+1))=\tau(s^+)$.\newline
(i) $|A_i^+|=k+1$, $1\le i\le k+2$.\newline
(ii) $\mathscr{A}^+$ is an HCSP-system on $S^+$.

Indeed, let $a\in S^+$. If $a=a_i$, $1\le i\le s$, then $\{a_1\}=A_u\cap A_v$, $1\le u<v\le k+1$, and we see immediately that $\{a\}=A_u^+\cap A_v^+$. If $a=a_i$, $s+1\le i\le s+k+1$ ($=s^+$) then $\{a\}=A_{i-s}^+\cap A_{k+2}^+$.\newline
(iii) We have $|\mathscr{A}^+|=\tau(s^+)$, $|S^+|=s^+$ and it follows from \ref{2.3} that $\mathscr{A}^+$ is a~size-minimal HCSP-system on $S^+$.\end{const}

\begin{lemma}\label{3.6} Let $k\ge4$, $r$ be an integer such that $\alpha(k-1)+1=\frac{k^2-k+2}2\le r\le\frac{k^2+k}2=\alpha(k)=s$ and let $R$ be any subset of $S$ such that $|R|=r$ ($\ge7$). Put $\mathscr{B}=\mathscr{A}\uparrow R$ ($=\{\,A\cap R\,|\,A\in\mathscr{A}\,\}$). Then $|\mathscr{B}|=k+1=\tau(r)=|\mathscr{A}|$ and $\mathscr{B}$ is a~size-minimal HCSP-system on $R$.\end{lemma}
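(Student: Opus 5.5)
The proof will combine three ingredients: restriction preserves the HCSP property (Observation \ref{1.3}(iii)), the value of $\tau(r)$ is computed by Lemma \ref{2.2}(ii), and size-minimality then follows from Proposition \ref{2.3}. The one point that is not automatic is that restriction could make two members coincide, so $|\mathscr{B}|=|\mathscr{A}|$ must be checked separately.

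First, since $R$ is non-empty and $\mathscr{A}$ is HCSP on $S$, Observation \ref{1.3}(iii) gives that $\mathscr{B}=\mathscr{A}\uparrow R$ is an HCSP-system on $R$. Next, the hypothesis $\alpha(k-1)+1\le r\le\alpha(k)$ lets us apply Lemma \ref{2.2}(ii) with $k-1$ in place of $k$, which yields $\tau(r)=(k-1)+2=k+1=\tau(s)=|\mathscr{A}|$. Since $r\ge3$, Proposition \ref{2.3} gives $|\mathscr{B}|\ge\tau(r)=k+1$. On the other hand, the map $A\mapsto A\cap R$ is onto $\mathscr{B}$, so $|\mathscr{B}|\le|\mathscr{A}|=k+1$. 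Hence $|\mathscr{B}|=k+1=\tau(r)$, and Proposition \ref{2.3} (the case of equality) shows that $\mathscr{B}$ is size-minimal on $R$.

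This sandwich argument also settles the only potential obstacle, namely that two distinct $A,C\in\mathscr{A}$ might have $A\cap R=C\cap R$, which would make $|\mathscr{B}|<|\mathscr{A}|$. The lower bound from Proposition \ref{2.3} rules this out, so injectivity of the restriction follows without extra work. As a sanity check one can also see it directly. By Proposition \ref{3.1}, each $A\in\mathscr{A}$ has exactly $k$ points, one shared with each other member, and distinct pairs meet in distinct points. If $A\cap R=C\cap R$, then every point of $R\cap A$ lies in $C$, so $R\cap A\subseteq A\cap C$ has at most one point. The $k$ points of $A$ lying in no other relevant overlap would then all be missing from $R$, giving $|R|\le s-(k-1)=\alpha(k-1)+1$. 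This is close to the boundary case, which is why the counting route through Proposition \ref{2.3} is the one I will use. The remaining work is just verifying that the arithmetic in Lemma \ref{2.2}(ii) applies for $k\ge4$ (so that $r\ge7\ge3$).
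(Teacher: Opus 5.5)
Your proof is correct and is essentially the paper's own argument: restriction preserves the HCSP property, Lemma \ref{2.2}(ii) gives $\tau(r)=k+1$, and the sandwich $\tau(r)\le|\mathscr{B}|\le|\mathscr{A}|=k+1$ together with Proposition \ref{2.3} yields both the equality and size-minimality. Your direct injectivity aside is unnecessary. As written it only bounds the points lost from $A$; applying the same observation to $C$ would give $|R|\le s-2(k-1)<\alpha(k-1)+1$ and close it. Either way it does not affect the main argument.
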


\begin{proof} If $a\in R$ then $\{a\}=A\cap B=(A\cap R)\cap(B\cap R)$ for some $A,B\in\mathscr{A}$. It follows that the system $\mathscr{B}$ is an HCSP-system on $R$. By \ref{2.3} and \ref{2.2}(ii), $|\mathscr{B}|\ge\tau(r)=k+1=\tau(s)$. On the other hand, $|\mathscr{B}|\le|\mathscr{A}|=\tau(s)=\tau(r)$. Thus $|\mathscr{B}|=\tau(r)$ and $\mathscr{B}$ is size-minimal on $R$ by \ref{2.3}.\end{proof}

\section{Main results}

\begin{theorem}\label{5.1} Let $S$ be a finite set such that $s=|S|\ge3$. An hypercompletely separating system $\mathscr{A}$ defined on $S$ is size-minimal if and only if $|\mathscr{A}|=\left\lceil\frac{1+\sqrt{8s+1}}2\right\rceil$.\end{theorem}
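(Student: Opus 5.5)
By Proposition \ref{2.3}, every HCSP-system on $S$ has at least $\tau(s)$ members, and any HCSP-system with exactly $\tau(s)$ members is size-minimal. That settles the implication ``$|\mathscr{A}|=\tau(s)\Rightarrow$ size-minimal''. For the converse, suppose $\mathscr{A}$ is size-minimal. It is enough to exhibit \emph{some} HCSP-system $\mathscr{C}$ on $S$ with $|\mathscr{C}|=\tau(s)$. Then size-minimality gives $|\mathscr{A}|\le|\mathscr{C}|=\tau(s)$, and Proposition \ref{2.3} gives $|\mathscr{A}|\ge\tau(s)$, so $|\mathscr{A}|=\tau(s)$. The whole proof therefore reduces to an existence statement: for every $s\ge3$ there is an HCSP-system on an $s$-element set with exactly $\tau(s)$ members.

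I would prove existence in two steps, first for triangular numbers and then for everything in between. For $s=\alpha(k)$ with $k\ge2$, I induct on $k$. The base case $k=2$ ($s=3$) is the system $\mathcal{P}_2(S_3)$ with $3=\tau(3)$ members, and $k=3$ ($s=6$) is the system $\mathscr{A}_7$ of Proposition \ref{3.10}(vi). For the inductive step I apply Construction \ref{3.8}. It turns an HCSP-system with $k+1$ members on an $\alpha(k)$-set into an HCSP-system $\mathscr{A}^+$ with $k+2=\tau(\alpha(k+1))$ members on an $\alpha(k+1)$-set. Nothing in the verification of \ref{3.8}(ii) actually needs $s\ge6$; the construction only uses that each point is the intersection of two of the sets. (Alternatively, I can give the direct model: take $S=\mathcal{P}_2(\{1,\dots,k+1\})$ and $A_i=\{\,T\in S\mid i\in T\,\}$. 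Then $A_i\cap A_j=\{\{i,j\}\}$, so this is an HCSP-system with $k+1$ members, and it avoids the induction altogether.)

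For general $s\ge3$, choose $k\ge2$ with $\alpha(k-1)+1\le s\le\alpha(k)$; by Lemma \ref{2.2}(ii) we then have $\tau(s)=k+1=\tau(\alpha(k))$. Take an HCSP-system $\mathscr{A}$ with $k+1$ members on a superset $S'\supseteq S$ with $|S'|=\alpha(k)$, and restrict it to $\mathscr{B}=\mathscr{A}\uparrow S$. By Observation \ref{1.3}(iii), $\mathscr{B}$ is HCSP on $S$ with $|\mathscr{B}|\le k+1=\tau(s)$. By Proposition \ref{2.3}, $|\mathscr{B}|\ge\tau(s)$, so equality holds. This is exactly the argument of Lemma \ref{3.6}, but it does not need that lemma's restriction $k\ge4$; it works for any $k$ once $s\ge3$, because \ref{2.3} is available for every $s\ge3$. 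The small cases $s=4,5$ are also covered directly by Proposition \ref{3.10}(iv),(v).

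The main obstacle is bookkeeping rather than mathematics. I have to make sure the lower bound \ref{2.3} is invoked only for $s\ge3$, and that the restriction cannot collapse members below $\tau(s)$. The latter is automatic from the lower bound. The one point needing care is that $\mathscr{A}\uparrow S$ could have coinciding members, so $|\mathscr{B}|$ might a priori be smaller than $k+1$. The squeeze between \ref{2.3} and $|\mathscr{B}|\le|\mathscr{A}|$ handles this, because \ref{2.3} applies to $\mathscr{B}$ as a system regardless of how many members of $\mathscr{A}$ merged.
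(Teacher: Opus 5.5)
Your proof is correct and is essentially the paper's. Both reduce to existence via Proposition~\ref{2.3}, obtain the triangular cases by induction with Construction~\ref{3.8}, and fill in the remaining $s$ by restriction as in Lemma~\ref{3.6}. Your direct model on $\mathcal{P}_2(\{1,\dots,k+1\})$ is just the dual description used in Theorem~\ref{5.4}, and you rightly note that the restriction argument works uniformly for all $s\ge3$.

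One small precision: Construction~\ref{3.8} needs every point to be the intersection of two \emph{distinct} members; it breaks for $\mathcal{P}_1(S_3)$. This holds for your base $\mathcal{P}_2(S_3)$, is preserved by the construction, and is guaranteed by Proposition~\ref{2.3} for $s\ge5$, so your induction is fine.
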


\begin{proof} In view of \ref{2.3}, it suffices to find at last one HCSP-system $\mathscr{A}$ on $S$ such that $|\mathscr{A}|=\tau(s)$. If $3\le s\le6$ then the particular examples are shown in \ref{3.10}.

Further, let $s=\alpha(k)$, $k\ge3$. Taking into account \ref{3.10} and \ref{3.8}, we proceed by induction.

Finally, if ($\alpha(k)+1=$) $\frac{k^2+k+2}2\le s\le\frac{k^2+3k}2$ ($=\alpha(k+1)-1$), $k\ge3$, then the desired example is found in \ref{3.6}.\end{proof}

\begin{remark} \rm The foregoing result is not true for $s=1,2$ (see \ref{3.10}(i),(ii)).\end{remark}

\begin{theorem}\label{5.4} Let $k\ge3$ and $s=|S|=\alpha(k)$ $(=\frac{k(k+1)}2\ge6)$. Then all size-minimal HCSP systems (defined on $S$) are isomorphic.\end{theorem}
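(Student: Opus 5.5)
By Theorem \ref{5.1} and \ref{2.2}(i), a system $\mathscr{A}$ on $S$ is size-minimal exactly when $|\mathscr{A}|=\tau(s)=k+1$. So the standing hypotheses preceding \ref{3.1} are satisfied, and every size-minimal HCSP-system $\mathscr{A}$ on $S$ has properties (ii)--(iv) of \ref{3.1}. The plan is to show that any such $\mathscr{A}$ is isomorphic to a canonical model. Let $K$ be a set with $|K|=k+1$. The model system lives on $\mathcal{P}_2(K)$ and consists of the \emph{stars} $St_x=\{\,T\in\mathcal{P}_2(K)\,|\,x\in T\,\}$, $x\in K$. This is the same structure as $\mathscr{A}_7$ for $k=3$ (and as the iterated Construction \ref{3.8}).

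First, given $\mathscr{A}$, define $\varphi\colon S\to\mathcal{P}_2(\mathscr{A})$ by letting $\varphi(a)=\{A,B\}$ be the unique pair with $A\cap B=\{a\}$ given by \ref{3.1}(ii). I will show $\varphi$ is a bijection.
\begin{enumerate}
\item[(a)] Injectivity is immediate: if $\varphi(a)=\varphi(b)=\{A,B\}$, then $\{a\}=A\cap B=\{b\}$.
\item[(b)] Since $|S|=\alpha(k)=\binom{k+1}{2}=|\mathcal{P}_2(\mathscr{A})|$, $\varphi$ is a bijection.
\end{enumerate}

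Next, the key step is to check that for $a\in S$ and $C\in\mathscr{A}$ we have $a\in C$ if and only if $C\in\varphi(a)$.
\begin{enumerate}
\item[(a)] The direction "$C\in\varphi(a)\Rightarrow a\in C$" is trivial.
\item[(b)] For the converse, suppose $a\in C$ and $\varphi(a)=\{A,B\}$ with $C\notin\{A,B\}$. By \ref{3.1}(iii), $C\cap A=\{a\}$, since $a$ lies in both and the intersection has exactly one element. So $\{A,C\}$ is a second pair with intersection $\{a\}$, contradicting the uniqueness in \ref{3.1}(ii).
\end{enumerate}
This is the same argument used in the proof of \ref{3.1}(iv). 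It shows $\varphi(A)=St_A$ for each $A\in\mathscr{A}$, i.e., $\varphi$ carries $\mathscr{A}$ onto the star system on $\mathcal{P}_2(\mathscr{A})$.

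Finally, let $\mathscr{A}'$ be another size-minimal HCSP-system on $S$ with its bijection $\varphi'$. Choose any bijection $\psi\colon\mathscr{A}\to\mathscr{A}'$; this is possible because both have $k+1$ elements. It induces a bijection $\hat\psi\colon\mathcal{P}_2(\mathscr{A})\to\mathcal{P}_2(\mathscr{A}')$, and $\hat\psi$ maps each star $St_A$ to $St_{\psi(A)}$. Then $\varphi'^{-1}\circ\hat\psi\circ\varphi$ is a permutation of $S$ sending each $A$ to $\psi(A)$, so $\mathscr{A}\cong\mathscr{A}'$.

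The only nontrivial point is the membership characterization in the key step, and it follows directly from \ref{3.1}(ii),(iii). The rest is bookkeeping.
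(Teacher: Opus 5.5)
Your proof is correct and follows the same route as the paper. Both arguments get $|\mathscr{A}|=k+1$ from \ref{5.1}, then use \ref{3.1}(ii),(iii) to identify each point $a$ with the unique pair of blocks containing it. This makes $\mathscr{A}$ the star system on $\mathcal{P}_2(\mathscr{A})$, and a bijection between two such systems then induces the isomorphism. Your write-up is in fact more complete: you explicitly check that $a$ lies in exactly the two blocks of $\varphi(a)$, which the paper's phrase ``due to \ref{3.1}(ii)'' leaves implicit.
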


\begin{proof}Let $\mathscr{A}$ be a size-minimal HCSP-system (on $S$). By \ref{5.1}, $|\mathscr{A}|=\tau(s)=k+1$. By \ref{3.1}(iii), $|A\cap B|=1$ for all $A,B\in\mathscr{A}$, $A\ne B$. Now, due to \ref{3.1}(ii), the dual system $\{A\in\mathscr{A}\,|\,a\in A\,\}$ is nothing else than the system $\mathcal{P}_2(\mathscr{A})$ (of two-element subsets of $\mathscr{A}$); we have $|\mathcal{P}(\mathscr{A})|=\binom{\tau(s)}{2}=\binom{\tau(\alpha(k)}{2}=\binom{k+1}{2}=\alpha(k)=s$. Now, if $\mathscr{B}$ is another size-minimal HCSP-system (on $S$) then the respective dual systems are isomorphic, and hence the systems are isomorphic as well.\end{proof}

\begin{remark} \rm The foregoing result remains true for $S=\alpha(1)=1$, but fails for $s=\alpha(2)=3$ (see \ref{3.10}(i),(iii)).\end{remark}

\end{document}